\newtheorem{theorem}{Theorem}
\newtheorem{proposition}{Proposition}
\newtheorem{corollary}{Corollary}
\newtheorem{remark}{Remark}
\newtheorem{definition}{Definition}
\theoremstyle{remark}
\newtheorem{example}{\textbf{Example}}
\DeclareRobustCommand{\stirlingtwo}{\genfrac\{\}{0pt}{}}
\def\s{\atopwithdelims[]}
\title{Some combinatorial properties of the Hurwitz series ring}
\author{Stefano Barbero, Umberto Cerruti, Nadir Murru\\ \\
Department of Mathematics G. Peano, University of Turin,\\
Via Carlo Alberto 10, 10123, Torino, ITALY\\ \\
stefano.barbero@unito.it, umberto.cerruti@unito.it, nadir.murru@unito.it}
\date{}
\begin{document}
\maketitle

\begin{abstract}
We study some properties and perspectives of the Hurwitz series ring $H_R[[t]]$, for a commutative ring with identity $R$. Specifically, we provide a closed form for the invertible elements by means of the complete ordinary Bell polynomials, we highlight some connections with well--known transforms of sequences, and we see that the Stirling transforms are automorphisms of $H_R[[t]]$. Moreover, we focus the attention on some special subgroups studying their properties. Finally, we introduce a new transform of sequences that allows to see one of this subgroup as an ultrametric dynamic space.
\end{abstract}

\section{The ring of Hurwitz series, transformations of sequences and automorphisms}

Given a commutative ring with identity $R$, let $H_R[[t]]$ denote the Hurwitz series ring whose elements are the formal series of the kind

$$A(t) := \sum_{n = 0}^{+\infty} \frac{a_n}{n!}t^n,$$

equipped with the standard sum and the binomial convolution as product. Given two formal series $A(t)$ and $B(t)$, the binomial convolution is defined as follows:

$$A(t) \star B(t) := C(t),$$

where

$$c_n := \sum_{h = 0}^n \binom{n}{h}a_hb_{n-h}.$$

The ring of Hurwitz series has been organizationally studied by Keigher \cite{Kei1} and in the recent years it has been extensively studied, see, e.g., \cite{Kei2}, \cite{Liu}, \cite{Gha}, \cite{Ben1}, \cite{Ben2}, \cite{Ben3}.

The Hurwitz series ring is trivially isomorph to the ring $H_R$ whose elements are infinite sequences of elements of $R$, with operations $+$ and $\star$. In the following, when we consider an element $a \in H_R$, we refer to a sequence $(a_n)_{n=0}^{+\infty}=(a_0,a_1,a_2,...)$, $a_i \in R$ for all $i \geq 0$, having exponential generating function (e.g.f.) denoted by $A(t)$.
Clearly for two sequences $a,b \in H_R$ with exponential generating functions $A(t)$ and $B(t)$ respectively, the sequence $c=a\star b$ has e.g.f. $C(t)=A(t)B(t)$. Moreover, fixed any positive integer $n$, we can also consider the rings $H_R^{(n)}$ whose elements are sequences of elements of $R$ with length $n$.

\begin{remark}
The binomial convolution is a commutative product and the identity in $H_R$ is the sequence
$$(1,0,0,...).$$
Moreover, $H_R$ can be also viewed as an $R$--algebra considering the map
$$\pi : R \rightarrow H_R, \quad \pi(r) := (r,0,0,...),$$
for any $r \in R$.
\end{remark}

\begin{proposition}
An element $a \in H_R$ is invertible if and only if $a_0 \in R$ is  invertible, i.e., 
$$H_R^* = \{a \in H_R : a_0 \in R^*\}.$$
\end{proposition}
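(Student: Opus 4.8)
The plan is to prove both implications directly from the definition of the binomial convolution, exploiting the fact that this product is commutative (as recorded in the Remark), so that a one-sided inverse is automatically two-sided. Throughout I write the identity as $(1,0,0,\ldots)$.

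For necessity, suppose $a \in H_R^*$, so there exists $b \in H_R$ with $a \star b = (1,0,0,\ldots)$. I would simply read off the $n=0$ coefficient of this identity: since $c_0 = \binom{0}{0} a_0 b_0 = a_0 b_0$, the equation forces $a_0 b_0 = 1$, which exhibits $b_0$ as an inverse of $a_0$ in $R$. Hence $a_0 \in R^*$, giving the inclusion $H_R^* \subseteq \{a \in H_R : a_0 \in R^*\}$.

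For sufficiency, suppose $a_0 \in R^*$, and I would construct the inverse $b$ one coefficient at a time by solving $c_0 = 1$ and $c_n = 0$ for all $n \geq 1$. Set $b_0 := a_0^{-1}$. For $n \geq 1$, I isolate the $h=0$ summand in
$$c_n = \sum_{h=0}^{n} \binom{n}{h} a_h b_{n-h} = a_0 b_n + \sum_{h=1}^{n} \binom{n}{h} a_h b_{n-h} = 0,$$
which rearranges to the recursion
$$b_n := -a_0^{-1} \sum_{h=1}^{n} \binom{n}{h} a_h b_{n-h}.$$
The decisive observation is that every coefficient $b_{n-h}$ appearing on the right has index $n-h \leq n-1$, so $b_n$ is determined entirely by the already-constructed $b_0,\ldots,b_{n-1}$. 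The recursion is therefore well defined at each stage and yields a sequence $b \in H_R$ with $a \star b = (1,0,0,\ldots)$; commutativity then makes $b$ a genuine two-sided inverse.

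I do not anticipate any real obstacle: the whole argument rests on the triangular structure of the binomial convolution, in which the $n$-th coefficient of the product contains $b_n$ only through the single term $a_0 b_n$. Invertibility of $a_0$ is thus exactly what is needed to solve for the top coefficient at every step, and this is precisely what the two implications encode.
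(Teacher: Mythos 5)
Your argument is correct and is exactly the one the paper has in mind: the paper dismisses the proof as straightforward, but immediately afterwards displays the same recursion $b_0=a_0^{-1}$, $b_n=-a_0^{-1}\sum_{h=1}^n\binom{n}{h}a_hb_{n-h}$ that you derive. Nothing further is needed.
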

\begin{proof}
The proof is straightforward.
\end{proof}

Given $a \in H_R^*$, we can recursively evaluate the terms of $b = a^{-1}$. Indeed,  $b_0 = a_0^{-1}$ and for all $n\geq1$
$$b_n = -a_0^{-1}\sum_{h=1}^n \binom{n}{h}a_hb_{n-h},$$
since the equality $a \star b = (1,0,0,...)$ implies $$a_{0}b_{0}=1, \quad
\sum_{h=0}^n\binom{n}{h}a_nb_{n-h}=0, \forall n\geq1.$$
On the other hand, we can find a nice closed form for the elements of $b$  by means of the complete ordinary Bell polynomials \cite{Bell}. First of all, we recall their definition as given in \cite{Port}.
\begin{definition} \label{def:bell}
Let us consider the sequence $x=(x_1,x_2,\ldots)$. The \emph{complete ordinary Bell polynomials} are defined by
$$B_0(x)=1, \quad \forall n\geq 1 \quad B_n(x)=B_n(x_1,x_2,\ldots,x_n)=\sum_{k=1}^nB_{n,k}(x),$$
where $B_{n,k}(x)$ are the \emph{partial ordinary Bell polynomials},  with
$$ B_{0,0}(x)=1, \quad \forall n\geq 1 \quad B_{n,0}(x)=0,\quad \forall k\geq 1 \quad B_{0,k}(x)=0,$$
$$B_{n,k}(x) =B_{n,k}(x_1,x_2,\ldots,x_{n}) =\sum_{\substack {i_1  + 2i_2  +  \cdots  + ni_n = n \\ i_1  + i_2  +  \cdots  + i_n  = k}} \frac{{k!}}{{i_1 !i_2 ! \cdots i_n !}}x_1^{i_1 } x_2^{i_2 }  \cdots x_n^{i_n },$$
or, equivalently,
$$B_{n,k}(x)=B_{n,k}(x_1,x_2,\ldots,x_{n-k+1})=\sum_{\substack {i_1  + 2i_2  +  \cdots  + (n-k+1)i_{n-k+1} = n-k+1 \\ i_1  + i_2  +  \cdots  + i_{n-k+1}  = k}} \frac{{k!}}{{i_1 !i_2 ! \cdots i_{n-k+1} !}}x_1^{i_1 } x_2^{i_2 }  \cdots x_{n-k+1}^{i_{n-k+1} },$$
satisfying the equality
$$ \left(\sum_{n\geq1}x_nz^n\right)^k=\sum_{n \geq k}B_{n,k}(x)z^n. $$
\end{definition}
Then, we introduce the Invert transform (see, e.g., \cite{Ber} for a detailed survey).

\subsubsection*{The Invert transform}
The Invert transform $\mathcal I$ maps a sequence $a=(a_n)_{n=0}^{+\infty}$ into a sequence $\mathcal{I}(a)=b=(b_n)_{n=0}^{+\infty}$ whose ordinary generating function satisfies 
\begin{equation*} 
\sum\limits_{n=0}^{+\infty}b_nt^n=\frac{\sum\limits_{n=0}^{+\infty}a_nt^n}{1-t\sum\limits_{n=0}^{+\infty}a_nt^n}. 
\end{equation*}
Barbero et al. \cite{Bar} highlighted the close relation between Invert transform and complete ordinary Bell polynomials: given $g \in H_R$ and $h = \mathcal I(g)$, we have, for all $n\geq0$, that
\begin{equation}\label{hn}
h_n = B_{n+1}(g_0,g_1,g_2,...,g_n).
\end{equation}

Now these tools allow us to explicitly find the terms of  $b=a^{-1}$ for every $a\in H_{R}^{*}$.

\begin{theorem}
Let $a, b=a^{-1} \in H_R^*$ be sequences with e.g.f. $A(t)$ and $B(t)$, respectively. Then for all $n\geq 0$, we have
$$b_n = \frac{n!B_n(g_0,g_1,g_2,\ldots,g_n)}{a_0},$$
where
$$g=(g_n)_{n=0}^{+\infty}=\left(-\frac{a_{n+1}}{a_0(n+1)!}\right)_{n=0}^{+\infty}.$$
\end{theorem}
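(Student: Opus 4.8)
The plan is to reduce the statement to the Invert--transform identity \eqref{hn} by passing to generating functions and exploiting the fact, recorded earlier in the excerpt, that the binomial convolution $\star$ corresponds to ordinary multiplication of exponential generating functions. Since $b=a^{-1}$ means $a\star b=(1,0,0,\dots)$, whose e.g.f. is the constant $1$, I would start from the relation $A(t)B(t)=1$, i.e. $B(t)=1/A(t)$ as formal power series in $t$, where the inverse exists because $a_0\in R^*$.

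First I would factor out the unit $a_0$ and rewrite $A(t)$ in terms of the \emph{ordinary} generating function $G(t)=\sum_{n\geq 0}g_nt^n$ of the sequence $g$. A direct computation gives
$$A(t)=a_0\Bigl(1+\sum_{n\geq 1}\frac{a_n}{a_0\,n!}t^n\Bigr)=a_0\bigl(1-tG(t)\bigr),$$
precisely because $g_{n-1}=-a_n/(a_0\,n!)$ is the chosen definition of $g$ (shifting the index by one absorbs the extra factor of $t$). This is the step that encodes the data of $a$ into exactly the form on which the Invert transform operates.

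Next I would invert and use the elementary identity $(1-tG)^{-1}=1+t\,G/(1-tG)$, so that $B(t)=a_0^{-1}\bigl(1+t\,G(t)/(1-tG(t))\bigr)$. By the definition of the Invert transform, $H(t):=G(t)/(1-tG(t))$ is exactly the ordinary generating function of $h=\mathcal I(g)$, whence $B(t)=a_0^{-1}\bigl(1+t\sum_{n\geq 0}h_nt^n\bigr)$. Substituting \eqref{hn}, namely $h_n=B_{n+1}(g_0,\dots,g_n)$, turns this into a closed expression for $B(t)$ whose coefficients are complete ordinary Bell polynomials.

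Finally I would extract coefficients, keeping in mind that $B(t)$ is an exponential generating function, so that $b_n=n!\,[t^n]B(t)$. The term $n=0$ gives $b_0=a_0^{-1}=a_0^{-1}B_0$, and for $n\geq 1$ the coefficient of $t^n$ comes from the shifted summand $t\,h_{n-1}$, yielding $b_n=n!\,a_0^{-1}h_{n-1}=n!\,a_0^{-1}B_n(g_0,\dots,g_{n-1})$. The main point requiring care---and essentially the only obstacle, which is bookkeeping rather than conceptual---is the index shift between $g$ and the arguments of the Bell polynomials, together with the observation that $B_n$ does not depend on its $(n+1)$-st argument; this last fact gives $B_n(g_0,\dots,g_{n-1})=B_n(g_0,\dots,g_n)$, so the computed formula matches the stated one verbatim.
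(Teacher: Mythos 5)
Your proposal is correct and follows essentially the same route as the paper: writing $A(t)=a_0(1-tG(t))$ with $G$ the ordinary generating function of $g$, inverting to recognize the Invert transform of $g$, applying the identity $h_n=B_{n+1}(g_0,\dots,g_n)$, and extracting coefficients. Your explicit remark that $B_n$ does not depend on its $(n+1)$-st argument is a useful clarification of an index shift the paper performs silently, but it does not change the argument.
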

\begin{proof}
The ordinary generating function of the sequence $g$ is
$$\bar G(t) = \frac{1}{t}\left( 1 - \frac{A(t)}{a_0} \right)$$
since
$$\bar G(t) = \frac{1}{t}\left(1- \frac{1}{a_0}\left( a_0 + \sum_{n=1}^{+\infty} \frac{a_n}{n!}t^n \right) \right) = -\frac{1}{t}\sum_{n=1}^{+\infty} \frac{a_n}{a_0n!}t^n = \sum_{n=0}^{+\infty}\left( -\frac{a_{n+1}}{a_0(n+1)!} \right)t^n.$$
Moreover, considering  $b=a^{-1}$ we have
$$
B(t) = \frac{1}{A(t)}=\frac{1}{a_0(1-t\bar G(t))}=\frac{1}{a_0}\left( 1 + \frac{t\bar G(t)}{1-t\bar G(t)} \right)=\frac{1}{a_0}(1 + t\bar H(t)),$$
where  $\bar H(t)$ is  the ordinary generating function of the sequence $h = \mathcal I(g)$. Thus relation (\ref{hn}) holds and,  since $B_0(g)=1$, we obtain
$$B(t) =\frac{1}{a_0}(1 + t\bar H(t))= \frac{1}{a_0}\left( 1 + \sum_{n=0}^{+\infty} B_{n+1}(g_0,g_1,g_2,\ldots,g_n)t^{n+1} \right) = \sum_{n=0}^{+\infty} \frac{B_n(g_0,g_1,g_2,\ldots,g_n)}{a_0}t^n$$
and the thesis easily follows.  
\end{proof}

We point out that some well--studied transforms acting on sequences can be viewed in $H_{R}[[t]]$ as the product (i.e., the binomial convolution) between a suitable fixed sequence and any sequence belonging to $H_{R}$.  We present two enlightening and interesting examples.
%\begin{itemize}
%\item \textbf
\subsubsection*{The Binomial interpolated transform}
The \emph{Binomial interpolated transform} $\mathcal L^{(y)}$, with parameter $y \in R$, maps any sequence $a \in H_R$ into a sequence $b = \mathcal L^{(y)}(a) \in H_R$, whose terms are
$$b_n = \sum_{h=0}^n\binom{n}{h}y^{n-h}a_h.$$
For a survey and a detailed study of the action of $\mathcal{L}^{(y)}$ on recurrence sequences we refer the reader to  \cite{Bar}.
The definition of this transform  by means of the binomial convolution is straightforward. Indeed, considering the sequence
$$\lambda = (y^n)_{n=0}^{+\infty},$$
we have for any $a \in H_R$ with e.g.f. $A(t)$
$$\mathcal L^{(y)}(a) = \lambda \star a.$$
with the corresponding e.g.f. given by the product $e^{yt}A(t).$

%\item \textbf
\subsubsection*{The Boustrophedon transform}
The \emph{Boustrophedon transform} $\mathcal B$, introduced and studied in \cite{Mil}, maps any sequence $a \in H_R$, with e.g.f. $A(t)$, into a sequence $b = \mathcal B(a) \in H_R$ with e.g.f. 
$$B(t) = (\sec t + \tan t)A(t).$$
This transform is closely related to the sequence $\beta = (\beta_n)_{n=0}^{+\infty}$ of the Euler zigzag numbers (see \cite{Mil}), with e.g.f.
$$\sum_{n=0}^{+\infty} \frac{\beta_n}{n!}t^n := \sec(t) + \tan(t),$$
since for any $a \in H_R$ clearly
$$\mathcal B(a) = \beta \star a.$$

The Hurwitz series ring is strictly connected to other well--known  sequence transforms. We consider another two examples, the alternating sign transform, which is  a little bit trivial, and the  interesting Stirling transform. We also show that both are examples of $H_{R}$--\emph{automorphisms}. 
\subsubsection*{The alternating sign trasform}
The alternating sign transform $\mathcal E$ maps any sequence $a \in H_R$ into a sequence $b = \mathcal E(a) \in H_R$, whose terms are
$$b_n = (-1)^n a_n.$$
The transform $\mathcal E$ often appears in studying properties of integer sequences combined with other transforms.
Clearly we have $\mathcal{E}=\mathcal{E}^{-1}$ and it is straightforward to see that, given any $a \in H_R$ with e.g.f. $A(t)$, then $\mathcal E(a)$ has e.g.f. $A(-t)$. Moreover it is easy to verify that for all sequences $a,b \in H_{R}$ 
$$\mathcal{E}(a+b)=\mathcal{E}(a)+\mathcal{E}(b),$$
and
$$\mathcal{E}(a\star b)=\mathcal{E}(a)\star \mathcal{E}(b),$$
showing that $\mathcal{E}$ is an authomorphism of $H_{R}$.

\subsubsection*{The Stirling transform}
The \emph{Stirling transform} $\mathcal S$ maps any sequence $a \in H_R$ into a sequence $b = \mathcal S(a) \in H_R$, whose terms are
$$b_n = \sum_{h=0}^n \stirlingtwo{n}{h} a_h,$$
where $\stirlingtwo{n}{h}$ are the Stirling numbers of the second kind
(see e.g. \cite{Gra}, chapter 6, for definition and properties of Stirling numbers of first and second kind).
Some properties of this transform are exsposed in \cite{Ber}, here we
observe that $\mathcal S$ is a bijection from $H_{R}$ to itself. The inverse  $\mathcal{S}^{-1}$  maps any sequence $a \in H_R$ into a sequence $b = \mathcal S^{-1}(a) \in H_R$, whose terms are
$$b_n = \sum_{h=0}^{n}(-1)^{n-h} {n \s h} a_h,$$
where $n \s h$ are the (unsigned) Stirling numbers of the first kind.
Moreover, we recall that for all $a \in H_R$ with e.g.f. $A(t)$, then $b = \mathcal S(a)$ has e.g.f. $B(t) = A(e^t-1)$.
It is very interesting to observe that, for all $a,b \in H_{R},$  $\mathcal{S}$ obviously satisfies 
$$\mathcal S(a + b) = \mathcal S(a) + \mathcal S(b),$$
but also
$$\mathcal S(a \star b) = \mathcal S(a) \star \mathcal S(b).$$
Indeed, remembering that, by definition, ${n\brace h}=0$ when $n<k$, and that the e.g.f. of the Stirling number of the second kind is $\frac{(e^{t}-1)^n}{n!}$ (see \cite{Gra}) , if we consider the e.g.f. $S(t)$ of $\mathcal{S}(a\star b)$, we have

\begin{align*}
S(t)&=\sum_{n=0}^{+\infty}\left(\sum_{h=0}^{n}{n \brace h} \left(\sum_{j=0}^{h} \binom{h}{j} a_{j}b_{h-j}\right)\right)\frac{t^{n}}{n!}=\sum_{h=0}^{+\infty}\left(\sum_{j=0}^{h}\binom{h}{j}a_{j}b_{h-j}\right)
\sum_{h=0}^{+\infty}{n \brace h}\frac{t^{n}}{n!}=\\
&=\sum_{h=0}^{+\infty}\left(\sum_{j=0}^{h}\binom{h}{j}a_{j}b_{h-j}\right)
\frac{\left(e^t-1\right)^{n}}{n!}=\mathcal{A}\left(e^{t}-1\right)\mathcal{B}\left(e^{t}-1\right)
\end{align*}
and $\mathcal{A}\left(e^{t}-1\right)\mathcal{B}\left(e^{t}-1\right)$ clearly is the e.g.f. of $\mathcal S(a) \star \mathcal S(b)$.
Hence $\mathcal S$ is an authomorphism of $H_R$. 

%\begin{definition}
%The transform $\mathcal E$ maps any sequence $a \in H_R$ into a sequence $b = \mathcal E(a) \in H_R$, whose terms are
%$$b_n = (-1)^n a_n.$$
%\end{definition}

%The transform $\mathcal E$ is very used for studying properties of integer sequences. In this case, it is immediate to see that, given any $a \in H_R$ with e.g.f. $A(t)$, then $\mathcal E(a)$ has e.g.f. $A(-t)$.

\section{Special subgroups of $H_R^{*}$}

The purpose of this section is to highlight some properties of two interesting subgroups of $H_R^*$, with respect to the binomial convolution product $\star$ operation. We also study  their relationship with the transforms presented in the previous section and with other transforms which we will define in the next. 
\begin{definition}
Let us denote $U_R$ and $B_R$ the subgroups of $H_R^*$ defined as
$$U_R=\{a \in H_R^*: a_0=1\}, \quad B_R=\{a \in U_R: \mathcal E(a) = a^{-1}\}.$$
\end{definition}
We start considering the subgroup $U_R$ and observing that, for all $a\in H_{R}$, we can find sequences in $U_{R}$ closely related with $a$, obtained by prepending to $a$ a finite sequence of 1. So it is natural to consider these sequences as the images of $a$ under the iteration of the following transform.

The \emph{1--prepending} transform $\mathcal V$ maps a sequence $a=(a_0,a_1,a_2,\ldots) \in H_R$ into the sequence $b=\mathcal{V}(a) =(1,a_0,a_1,a_2,\ldots) \in U_R$. We denote  $\mathcal V^k$ as the $k$--times iteration of $\mathcal V$ and obviously $\mathcal V^k(a) \in U_R$, for all $k \geq 1$. We observe that the action of $\mathcal{V}^{k}$ on a sequence $a\in H_{R}^*$ corresponds to the $k$--th iteration of the integral operator  
$$\mathcal J(\cdot)(t) := 1 + \int_0^t(\cdot) du$$
on the e.g.f. $A(t)$ of $a$. In particular, 
$$\mathcal J^k (A)(t)= \underbrace{\mathcal J \circ ... \circ \mathcal J}_{k-\text{times}}(A)(t)=\sum_{h = 0}^{k - 1}\frac{t^h}{h!} + \sum_{h = k}^{+\infty} a_{h-k}\frac{t^{h}}{h!}=V(t),$$ 
where $V(t)$ is the e.g.f. of $\mathcal{V}^{k}(a)$, and clearly
$$V(0)=V'(0)=...=V^{(k-1)}(0)=1, \quad V^{(k)}(t) = A(t),$$
being $V^{(k)}(t)$  the $k$--th derivative of $V(t)$. 

Now, we explore some interesting properties related to the subgroup $B_R$. First of all we  characterize all the elements in $B_{R}$.
\begin{theorem}\label{br}
All the elements in $B_{R}$ corresponds to the sequences of $U_{R}$  whose e.g.f. $A(t)$ is the solution of 
\begin{equation}\label{system}
\begin{cases}
A'(t)=g(t)A(t)\\
A(0)=1
\end{cases}
\end{equation} 
where $g(t)$ is any fixed even function. Hence, if we consider the formal exponential operator $exp$ such that $exp(f(t))=\sum\limits_{n=0}^{+\infty}\frac{(f(t))^{n}}{n!}$,  the e.g.f. of $a\in B_{R}$ is $A(t)=exp(h(t))$ where $h(t)$ is an odd function.
\end{theorem}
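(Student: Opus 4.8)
The plan is to translate membership in $B_R$ into a functional equation for the exponential generating function and then solve that equation through the formal logarithm, which in the Hurwitz setting stays inside $H_R$.

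First I would rewrite the defining condition. Since $a\in U_R$ forces $A(0)=a_0=1$, the series $A(t)$ is a unit and, because the binomial convolution corresponds to ordinary multiplication of e.g.f.'s and the identity $(1,0,0,\dots)$ has e.g.f. $1$, the inverse $a^{-1}$ has e.g.f. $1/A(t)$. As recalled above, $\mathcal E(a)$ has e.g.f. $A(-t)$. Hence the requirement $\mathcal E(a)=a^{-1}$ is equivalent to the single identity
$$A(t)\,A(-t)=1.$$

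Next I would set $h(t):=\log A(t)$. Because $A(0)=1$, this formal logarithm is well defined, has zero constant term, and---this is the key point in the Hurwitz framework---its coefficients are integer polynomials in $a_1,a_2,\dots$ (dually, the coefficients of $\exp$ are given by universal polynomials with nonnegative integer coefficients), so no division by integers in $R$ is needed and $\exp$ is a genuine bijection, in particular injective, between the series with zero constant term and those with constant term $1$. Writing $A=\exp(h)$ and using $\exp(u)\exp(v)=\exp(u+v)$, the identity $A(t)A(-t)=1$ becomes $\exp\bigl(h(t)+h(-t)\bigr)=1$; injectivity of $\exp$ then forces $h(t)+h(-t)=0$, i.e. $h$ is odd. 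This already yields the second form of the statement, $A=\exp(h)$ with $h$ odd.

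Finally I would connect this to the differential system. Differentiating $A=\exp(h)$ gives $A'(t)=h'(t)A(t)$, so $A$ solves $A'=gA$ with $g:=h'$ and $A(0)=\exp(0)=1$; since $g$ is the derivative of an odd series it is even. Conversely, starting from the system with $g$ even, the antiderivative $h(t)=\int_0^t g(u)\,du$ is odd, and the unique solution with $A(0)=1$ is $A=\exp(h)$, returning us to the odd-$h$ picture. Alternatively one can bypass logarithms entirely: differentiating $A(t)A(-t)=1$ gives $A'(t)A(-t)-A(t)A'(-t)=0$, whence $A'/A$ is even; and conversely, if $A'=gA$ with $g$ even then $\frac{d}{dt}\bigl(A(t)A(-t)\bigr)=\bigl(g(t)-g(-t)\bigr)A(t)A(-t)=0$ with value $1$ at $t=0$, so $A(t)A(-t)\equiv 1$. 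I expect the only point needing care to be the justification that $\log$, $\exp$ and the integral operator all preserve $H_R$ over an arbitrary commutative ring $R$; once that is recorded, the three conditions---$a\in B_R$, $A$ solving the system with $g$ even, and $A=\exp(\text{odd})$---are seen to be equivalent by direct manipulation.
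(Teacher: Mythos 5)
Your proposal is correct, and its ``alternative'' route --- differentiating $A(t)A(-t)=1$ to conclude that $g=A'/A$ is even, then integrating to obtain an odd $h$ with $A=\exp(h)$ --- is exactly the paper's proof. Your primary route through the formal logarithm is only a reordering of the same ingredients, but you add two things the paper leaves implicit: an explicit verification of the converse direction (that $A'=gA$ with $g$ even and $A(0)=1$ forces $A(t)A(-t)\equiv 1$, via $\frac{d}{dt}\bigl(A(t)A(-t)\bigr)=\bigl(g(t)-g(-t)\bigr)A(t)A(-t)=0$ together with the fact that a Hurwitz series with vanishing derivative is constant), and the observation that $\exp$, $\log$ and term-by-term integration stay inside $H_R$ over an arbitrary commutative ring because in the divided-power setting they are given by integer polynomials and index shifts. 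Both points are worth recording, since the paper establishes only the forward implication carefully and dismisses the rest as straightforward.
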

\begin{proof}
 It is immediate to see that $a \in B_R$ if and only if $A(t)A(-t)=1$. If we differentiate this relation with respect to $t$ we obtain
 $$A'(t)A(-t)-A(t)A'(-t)=0$$
 which is equivalent to 
 $$\frac{A'(t)}{A(t)}=\frac{A'(-t)}{A(-t)}.$$
 Thus we have $g(t)=\frac{A'(t)}{A(t)}$ where, from the previous relation, $g(t)=g(-t)$, i.e. $g(t)$ is an even function, and, obviously, we must have $A(0)=1$, since  for all sequences  $a\in U_{R}$ $A(0)=a_0=1$. It is straightforward to verify that, given $g(t)$, a formal integration term by term of its power series corresponds to the series of an odd function $h(t)$, and clearly $A(t)=exp(h(t))$ satisfies (\ref{system}).
\end{proof}
The transforms $\mathcal{L}^{y},\mathcal{B},$ and $\mathcal{E}$ act on $B_{R}$  preserving the closure, as we point out in the following proposition.

\begin{proposition}
The group $B_R$ is closed with respect to the transforms $\mathcal E$, $\mathcal B$ and $\mathcal L^{(y)}$, for any $y \in R$.
\end{proposition}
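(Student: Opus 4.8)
The plan is to reduce the entire statement to the single functional equation that characterizes $B_R$. By Theorem \ref{br}, or more precisely by the observation opening its proof, a sequence $a \in U_R$ with e.g.f. $A(t)$ lies in $B_R$ if and only if $A(t)A(-t) = 1$. Since all three transforms are already defined on the whole of $H_R$ and act transparently on e.g.f.'s, I would simply compute the e.g.f. of each image and check that it again satisfies this product--one relation, while also verifying that its constant term equals $1$ so that the image lands in $U_R$ (and is thus invertible by the Proposition).

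First, for $\mathcal{L}^{(y)}$: the image $\mathcal{L}^{(y)}(a)$ has e.g.f. $B(t) = e^{yt}A(t)$, so $B(t)B(-t) = e^{yt}e^{-yt}A(t)A(-t) = A(t)A(-t) = 1$, and $B(0) = A(0) = 1$, whence $\mathcal{L}^{(y)}(a) \in B_R$. The transform $\mathcal{E}$ is even more immediate: its image has e.g.f. $A(-t)$, and substituting $-t$ for $t$ in $A(t)A(-t)=1$ shows that $A(-t)A(t)=1$ is literally the same condition, so closure is automatic; equivalently, on $B_R$ one has $\mathcal{E}(a)=a^{-1}$, which remains in the group $B_R$.

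The only computation with any content is the Boustrophedon case. Here $\mathcal{B}(a)$ has e.g.f. $B(t) = (\sec t + \tan t)A(t)$, so that $B(t)B(-t) = (\sec t + \tan t)(\sec(-t)+\tan(-t))\,A(t)A(-t)$. The crux is the parity of the trigonometric factors: $\sec$ is even and $\tan$ is odd, so $(\sec t + \tan t)(\sec t - \tan t) = \sec^2 t - \tan^2 t = 1$, and combined with $A(t)A(-t)=1$ this yields $B(t)B(-t)=1$; since $\beta_0 = 1$ we also get $B(0)=1$, giving $\mathcal{B}(a)\in B_R$.

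The main, and rather mild, obstacle is to make sure these manipulations are legitimate in the formal Hurwitz setting over an arbitrary commutative ring $R$. I would record that $\sec t + \tan t = \sum_{n\ge 0}\beta_n t^n/n!$ has integer coefficients $\beta_n$ (the Euler zigzag numbers), so it is a genuine element of $H_R$ with unit constant term, hence invertible, and that the identity $\sec^2 t - \tan^2 t = 1$ together with the parity statements is an identity of formal power series, so no convergence is needed. Once these formal points are in place, each of the three verifications collapses to a one-line substitution into the relation $A(t)A(-t)=1$.
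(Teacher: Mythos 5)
Your proposal is correct and follows essentially the same route as the paper: reduce membership in $B_R$ to the functional equation $A(t)A(-t)=1$, note it is preserved under multiplication by $e^{yt}$ and under $t\mapsto -t$, and for the Boustrophedon case use the identity $(\sec t+\tan t)(\sec(-t)+\tan(-t))=\sec^2 t-\tan^2 t=1$, which is exactly the computation the paper performs (written there as $B(t)-1/B(-t)=0$) to show $\beta\in B_R$. Your added remark that the zigzag numbers are integers, so the identity holds formally over any $R$, is a harmless strengthening of what the paper leaves implicit.
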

\begin{proof}
By definition of $B_R$, it is immediate to check that $\mathcal E(B_R) = B_R$ (with this notation, we say that given any $a \in B_R$, then $\mathcal E(a)$ is still in $B_R$).
Given any $a \in B_R$, with e.g.f. $A(t)$, we have that $b = \mathcal L^{(y)}(a)$ has e.g.f. $e^{(yt)}A(t)$, and clearly $b \in B_R$ since 
$$\left(e^{yt}A(t)\right)\left(e^{-yt}A(-t)\right)=A(t)A(-t)=1.$$
Finally, let us recall that the Euler zig--zag numbers $\beta$ have e.g.f. $B(t) = \sec(t) + \tan(t)$ which satisfies $B(t)B(-t)=1$  as a simple calculation shows
$$B(t) - \frac{1}{B(-t)} = \frac{1+\sin(t)}{\cos(t)}-\frac{\cos(-t)}{1+\sin(-t)} = \frac{1-\sin^2(t)-\cos^2(t)}{\cos(t)(1-\sin(t))} = 0,$$
i.e., $\beta \in B_R$. Hence, given any $a \in B_R$, $\mathcal B(a) = \beta \star a \in B_R$.
\end{proof}

\begin{remark}
The group $B_R$ is not closed with respect to the transform $\mathcal S$. Indeed, if $A(t)$ is the e.g.f. of $a\in B_{R}$, the e.g.f. of $S(a)$ is $A(e^{t}-1)$, while the e.g.f. of $\varepsilon(S(a))$ is $A(e^{-t}-1)$ and in general $A(e^{-t}-1)A(e^{t}-1)\neq 1$. 
It would be interesting to characterize the group $\mathcal S(B_R)$.
\end{remark}
Let us examine in depth the structure of a sequence $a \in B_{R}$. From  the definition of $B_{R}$ and from Theorem \ref{br}, we observe that
the elements of a sequence $a \in B_R$ are constrained to severe restrictions, since the equality $\mathcal E(a) = a^{-1}$ must hold. If we pose $b = a^{-1}$ and $c = \mathcal E(a)$, we have, for instance,
$$b_0 = 1, \quad b_1 = -a_1, \quad b_2 = 2a_1^2-a_2$$
and
$$c_0 = 1, \quad c_1 = -a_1, \quad c_2 = a_2,$$
i.e., the element $a_1$ of the sequence $a$ can be arbitrary, while $a_2$ must satisfy
$$a_2 = 2a_1^2 - a_2,$$
i.e., $a_2 = a_1^2$. By continuing in this way, we can also see, e.g., that $a_3$ can be arbitrary, while $a_4 = -3a_1^4 + 4a_1a_3$. Thus, any sequence $a \in B_{R}$ is completely determined when we fix the values of $a_{2k-1}, k=1,2,\cdots$. Indeed, the following theorem shows how to evaluate the terms with even positive index as functions of the ones with odd index, by means of the partial ordinary Bell polynomials described in Definition \ref{def:bell}.

\begin{theorem} \label{thm:a2n}
Given any $a \in B_R$, we have
$$a_{2n} = (2n)! \sum_{k=0}^n \binom{\frac{1}{2}}{k}B_{n+k,2k}(x_1,...,x_{n-k+1}),\quad \forall n\geq1,$$
where $x_i=\frac{a_{2i-1}}{(2i-1)!}$, $\binom{\frac{1}{2}}{k} = \frac{\prod_{j=0}^{k-1}\left(\frac{1}{2}-j\right)}{k!}.$
\end{theorem}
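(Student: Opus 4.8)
The plan is to convert the defining constraint of $B_R$ into a single functional equation for the e.g.f. and then to read off the even-indexed coefficients. Since $a \in B_R$ means $\mathcal E(a) = a^{-1}$, and $\mathcal E(a)$ has e.g.f. $A(-t)$ while $a^{-1}$ has e.g.f. $1/A(t)$, membership in $B_R$ is equivalent to $A(t)A(-t) = 1$ (exactly the relation used in the proof of Theorem \ref{br}). First I would split $A$ into its even and odd parts,
$$E(t) = \sum_{i=0}^{+\infty}\frac{a_{2i}}{(2i)!}t^{2i}, \qquad O(t) = \sum_{i=1}^{+\infty}\frac{a_{2i-1}}{(2i-1)!}t^{2i-1} = \sum_{i=1}^{+\infty}x_i t^{2i-1},$$
so that $A(t) = E(t)+O(t)$ and $A(-t) = E(t)-O(t)$. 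The constraint $A(t)A(-t)=1$ then reads $E(t)^2 - O(t)^2 = 1$, and since $E(0)=a_0=1$ (as $B_R\subseteq U_R$) we may take the formal square root with constant term $1$, giving $E(t) = \bigl(1 + O(t)^2\bigr)^{1/2}$.

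Next I would observe that $a_{2n} = (2n)!\,[t^{2n}]E(t)$, so the whole problem reduces to extracting the coefficient of $t^{2n}$ in $\bigl(1+O(t)^2\bigr)^{1/2}$. Expanding by the generalized binomial theorem,
$$E(t) = \sum_{k=0}^{+\infty}\binom{1/2}{k}O(t)^{2k},$$
a legitimate formal identity because $O(t)^2$ has no constant term. It then remains to identify $[t^{2n}]O(t)^{2k}$ with a partial ordinary Bell polynomial.

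The key step is the following bookkeeping. Writing $P(z) = \sum_{i\ge1}x_i z^i$, one has $O(t) = t^{-1}P(t^2)$, hence $O(t)^{2k} = t^{-2k}P(t^2)^{2k}$. Applying the defining relation of the partial ordinary Bell polynomials from Definition \ref{def:bell}, namely $\bigl(\sum_{i\ge1}x_i z^i\bigr)^{2k} = \sum_{m\ge 2k}B_{m,2k}(x)z^m$, with $z = t^2$ gives $P(t^2)^{2k} = \sum_{m\ge 2k}B_{m,2k}(x)t^{2m}$, and therefore $O(t)^{2k} = \sum_{m\ge 2k}B_{m,2k}(x)t^{2m-2k}$. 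The coefficient of $t^{2n}$ is selected by $2m-2k = 2n$, i.e. $m = n+k$, which is admissible precisely when $n+k\ge 2k$, that is $k\le n$; for $k>n$ the contribution vanishes, and this is exactly what truncates the outer sum at $k=n$. Thus $[t^{2n}]O(t)^{2k} = B_{n+k,2k}(x)$, and collecting terms yields
$$a_{2n} = (2n)!\sum_{k=0}^{n}\binom{1/2}{k}B_{n+k,2k}(x),$$
with argument list $x_1,\dots,x_{n-k+1}$ matching the second form in Definition \ref{def:bell}, since $(n+k)-2k+1 = n-k+1$.

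I expect the main obstacle to be precisely this index accounting in the final step: correctly handling the $t^{-2k}$ shift produced by working with the odd series $O(t)$ rather than a full power series, and seeing that it converts the Bell index $m$ into $n+k$ while simultaneously forcing the cutoff $k\le n$. The only other point requiring care is the formal legitimacy of the square root and of the binomial expansion; this is harmless because $O(t)^2$ has lowest-order term of degree $2$, so each coefficient of $E(t)$ is a finite polynomial expression in the $x_i$ (equivalently in the odd terms $a_{2i-1}$), the factors $\binom{1/2}{k}$ being interpreted exactly as in the statement.
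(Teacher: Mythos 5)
Your proposal is correct and follows essentially the same route as the paper's own proof: split $A(t)$ into even and odd parts (the paper's $P(t)$ and $D(t)$), derive $P(t)^2 - D(t)^2 = 1$ from $A(t)A(-t)=1$, take the formal square root, expand binomially, and use the defining identity of the partial ordinary Bell polynomials with the substitution $m = n+k$. Your explicit handling of the $t^{-2k}$ shift and of the cutoff $k \le n$ is, if anything, slightly more careful than the paper's.
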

\begin{proof}
Let $A(t)$ be the e.g.f. of $a$. Clearly  $A(t) = P(t) + D(t)$, where
\begin{equation}\label{pt}
P(t) = \sum_{n=0}^{+\infty} \frac{a_{2n}}{(2n)!}t^{2n}
\end{equation}
and
\begin{equation}\label{dt}
 D(t) = \sum_{n=1}^{+\infty} \frac{a_{2n-1}}{(2n-1)!}t^{2n-1} = \frac{1}{t}\sum_{n=1}^{+\infty} \frac{a_{2n-1}}{(2n-1)!}t^{2n}.
 \end{equation}
Moreover, we have
$$1 = A(t)A(-t) = (P(t) + D(t))(P(t) - D(t)) = (P(t))^2 - (D(t))^2,$$
since $A(-t) = P(-t) + D(-t) = P(t) - D(t)$ and $\mathcal E(a) = a^{-1}$. Now, observing that $P(0)=a_0=1$ and $D(0)=0$, we obtain from the formal Maclaurin power series of $(1+X)^{\frac{1}{2}}$ that
$$P(t) = \left(1 + \left(D(t)\right)^2\right)^{\frac{1}{2}} = \sum_{k=0}^{+\infty} \binom{\frac{1}{2}}{k}\left(D(t)\right)^{2k}.$$
By definition of partial ordinary Bell polynomials we have
$$\left(D(t)\right)^{2k} = \sum_{m=2k}^{+\infty} B_{m,2k}(x_1,...,x_{m-2k+1})t^{2m-2k}.$$
If we set $n=m-k$, we get
$$P(t) = \sum_{k=0}^{+\infty}\binom{\frac{1}{2}}{k}\sum_{n=k}^{+\infty} B_{n+k,2k}(x_1,...,x_{n-k+1})t^{2n} = \sum_{n=0}^{+\infty}\left( \sum_{k=0}^n \binom{\frac{1}{2}}{k}B_{n+k,2k}(x_1,...,x_{n-k+1}) \right)t^{2n}.$$
From this equality, comparing the coefficients of the respective even powers of $t$ in (\ref{pt}) we finally obtain
$$a_{2n} = (2n)! \sum_{k=0}^n \binom{\frac{1}{2}}{k}B_{n+k,2k}(x_1,...,x_{n-k+1}),\quad \forall n\geq1.$$
\end{proof}
By Definition \ref{def:bell} and observing that $B_{0,0}=1$, $B_{h,0}=0$ for $h\geq1$, we immediately have the following corollary.
\begin{corollary} \label{cor:a2n}
Given $a \in B_R$, we have
\begin{equation*}
a_{2n}=\left(2n\right)!\sum_{k=1}^{n}\binom{\frac{1}{2}}{k}\sum\limits_{\substack{i_1+i_2+\cdots+i_{n-k+1}=2k\\i_1+2i_2+\cdots+(n-k+1)i_{n-k+1}=n+k}}
\left(2k\right)!\prod\limits_{j=1}^{n-k+1}\frac{1}{i_{j}!\left((2j-1)! \right)^{i_{j}}}\prod\limits_{j=1}^{n-k+1}a_{2j-1}^{i_{j}}.\label{eq:a2nexplicit}
\end{equation*}
\end{corollary}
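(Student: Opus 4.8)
The plan is to derive the corollary directly from Theorem~\ref{thm:a2n} by substituting the explicit summation definition of the partial ordinary Bell polynomials given in Definition~\ref{def:bell}, and then performing the routine algebraic simplifications. Since the corollary merely expands the compact expression $B_{n+k,2k}(x_1,\ldots,x_{n-k+1})$ into its full multi-index form, no new mathematical idea is required; the work is entirely bookkeeping.

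First I would observe that in the formula of Theorem~\ref{thm:a2n} the $k=0$ term contributes $\binom{1/2}{0}B_{n,0}(x_1,\ldots,x_{n+1})$, which by the stated boundary conditions $B_{0,0}=1$ and $B_{h,0}=0$ for $h\geq 1$ vanishes for every $n\geq 1$. Hence the summation index may be started at $k=1$, accounting for the change from $\sum_{k=0}^n$ to $\sum_{k=1}^n$ in the corollary. Next I would apply the second (equivalent) explicit expression for $B_{n,k}$ in Definition~\ref{def:bell}, specialized to the partial Bell polynomial $B_{n+k,2k}$. Here the ambient degree is $N=n+k$ and the block count is $K=2k$, so the two constraint equations become $i_1+i_2+\cdots+i_{N-K+1}=2k$ and $i_1+2i_2+\cdots+(N-K+1)i_{N-K+1}=n+k$. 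Since $N-K+1=(n+k)-2k+1=n-k+1$, the number of variables is exactly $n-k+1$, matching the indexing in the corollary.

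The only remaining step is to unwind the substitution $x_i=\dfrac{a_{2i-1}}{(2i-1)!}$ inside the monomial $x_1^{i_1}x_2^{i_2}\cdots x_{n-k+1}^{i_{n-k+1}}$. Writing $x_j^{i_j}=\dfrac{a_{2j-1}^{i_j}}{\bigl((2j-1)!\bigr)^{i_j}}$ and collecting the combinatorial coefficient $\dfrac{(2k)!}{i_1!\,i_2!\cdots i_{n-k+1}!}$ from the definition, the product over $j$ separates into the factor $\prod_{j=1}^{n-k+1}\dfrac{1}{i_j!\bigl((2j-1)!\bigr)^{i_j}}$ times $\prod_{j=1}^{n-k+1}a_{2j-1}^{i_j}$, with the overall $(2k)!$ pulled out front. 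Carrying the prefactor $(2n)!\binom{1/2}{k}$ from Theorem~\ref{thm:a2n} through the sum then yields precisely equation~(\ref{eq:a2nexplicit}).

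I do not anticipate a genuine obstacle here, as the result is a mechanical expansion; the one point demanding care is verifying that the index ranges of the two constraint equations are transcribed correctly after the substitutions $N=n+k$ and $K=2k$, so that the upper limit $n-k+1$ on the variables and products is consistent throughout. Once that arithmetic is checked, the corollary follows immediately from Theorem~\ref{thm:a2n} and Definition~\ref{def:bell}.
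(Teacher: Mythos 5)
Your proposal is correct and follows exactly the route the paper intends: the paper justifies the corollary with the single remark that it follows from Definition~\ref{def:bell} together with $B_{0,0}=1$ and $B_{h,0}=0$ for $h\geq 1$, which is precisely your observation that the $k=0$ term vanishes followed by the mechanical expansion of $B_{n+k,2k}$ and the substitution $x_i=a_{2i-1}/(2i-1)!$. Your bookkeeping of the index ranges ($N-K+1=n-k+1$) and of the weighted-sum constraint equal to $n+k$ is consistent with the statement of the corollary.
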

On the other hand, it is also possible to determine the sequences $a\in B_{R}$, with $a_2\in R^{*}$ and such that $a_2$ is a square in $R$, by fixing the terms $a_{2k},k=1,2,\cdots$ and finding the terms with odd index as functions of the ones with even index.
\begin{theorem} \label{thm:a2n1}
Given $a \in B_R$ such that $a_2 \in R^*$ and $x^2=a_2$ is solvable in $R$, we have
$$a_{2n+1} = (2n+1)! \sum_{k=0}^n a_2^{\frac{1}{2}-k} \binom{\frac{1}{2}}{k}B_{n,k}(x_1,...,x_{n-k+1}),\quad \forall n\geq0,$$
where $x_i=\frac{1}{(2i+2)!}\sum\limits_{k=0}^{i+1} \binom{2i+2}{k} a_{2k} a_{2(n-k+1)}$, and $a_{2}^{\frac{1}{2}}\in R$ is a  solution of
$x^2=a_2.$
\end{theorem}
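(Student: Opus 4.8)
The plan is to follow the proof of Theorem \ref{thm:a2n} step by step, but to invert the roles of the even and odd parts: there one solves for the even part $P(t)$ in terms of the odd part $D(t)$, whereas here I would solve for $D(t)$ in terms of $P(t)$. Write $A(t) = P(t) + D(t)$ with $P(t)$ and $D(t)$ the even and odd parts of the e.g.f., exactly as in (\ref{pt}) and (\ref{dt}). As in that proof, the condition $\mathcal E(a) = a^{-1}$, i.e. $A(t)A(-t)=1$, gives $P(t)^2 - D(t)^2 = 1$; now I rearrange this as $D(t)^2 = P(t)^2 - 1$ and extract a formal square root of the right-hand side.

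First I would record the Cauchy product of $P(t)$ with itself:
$$P(t)^2 - 1 = \sum_{m=1}^{+\infty} q_m\, t^{2m}, \qquad q_m = \sum_{k=0}^m \frac{a_{2k}}{(2k)!}\,\frac{a_{2(m-k)}}{(2(m-k))!},$$
where the constant term vanishes because $a_0 = 1$, and a direct check gives $q_1 = a_2$. Writing $D(t) = t\,E(t)$ with $E(t) = \sum_{n\geq 0} \frac{a_{2n+1}}{(2n+1)!}t^{2n}$, the relation becomes $E(t)^2 = \sum_{j\geq 0} q_{j+1} t^{2j}$, a power series in $t^2$ whose constant term is $q_1 = a_2$. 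I would then factor out $a_2$ and apply the formal Maclaurin series of $(1+X)^{\frac{1}{2}}$:
$$E(t) = a_2^{\frac{1}{2}}\left(1 + \sum_{j=1}^{+\infty} \frac{q_{j+1}}{a_2}t^{2j}\right)^{\frac{1}{2}} = a_2^{\frac{1}{2}}\sum_{k=0}^{+\infty}\binom{\frac{1}{2}}{k}\left(\sum_{j=1}^{+\infty}\frac{q_{j+1}}{a_2}t^{2j}\right)^{k},$$
where $a_2^{\frac{1}{2}}$ is the chosen solution of $x^2 = a_2$, taken equal to $a_1$ so that the value $E(0) = a_1$ is respected.

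The combinatorial core is then identical to Theorem \ref{thm:a2n}: by the defining identity of the partial ordinary Bell polynomials in Definition \ref{def:bell}, applied with the formal variable $z = t^2$ and the sequence whose $j$-th entry is $q_{j+1}/a_2$, each $k$-th power expands as $\sum_{m\geq k} B_{m,k}\!\left(\frac{q_2}{a_2},\frac{q_3}{a_2},\ldots\right)t^{2m}$. Using the homogeneity relation $B_{m,k}(\lambda x_1,\lambda x_2,\ldots) = \lambda^k B_{m,k}(x_1,x_2,\ldots)$, which is immediate from the definition since every monomial of $B_{m,k}$ has total degree $k$, I would pull the factor $a_2^{-k}$ out of each Bell polynomial. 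Setting $x_i := q_{i+1}$ and comparing the coefficient of $t^{2n}$ in the expansion of $E(t)$ with $E(t) = \sum_{n\geq 0}\frac{a_{2n+1}}{(2n+1)!}t^{2n}$ then yields
$$\frac{a_{2n+1}}{(2n+1)!} = \sum_{k=0}^n a_2^{\frac{1}{2}-k}\binom{\frac{1}{2}}{k}B_{n,k}(x_1,\ldots,x_{n-k+1}),$$
which is the asserted formula after multiplying by $(2n+1)!$.

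I expect the main difficulty to be purely notational rather than conceptual: one must check that $x_i = q_{i+1}$ agrees with the coefficient displayed in the statement, which is the routine rewriting $\frac{1}{(2k)!\,(2(i+1-k))!} = \frac{1}{(2i+2)!}\binom{2i+2}{2k}$ absorbing the two factorials into a single binomial coefficient, so that $x_i = \frac{1}{(2i+2)!}\sum_{k=0}^{i+1}\binom{2i+2}{2k}a_{2k}\,a_{2(i-k+1)}$. The two hypotheses on $a_2$ enter precisely here and in the square-root step: requiring $x^2 = a_2$ to be solvable guarantees that $a_2^{\frac{1}{2}} \in R$ exists, and requiring $a_2 \in R^*$ guarantees that the negative powers $a_2^{-k}$ appearing after the homogeneity reduction are legitimate elements of $R$, so that the entire computation remains inside $H_R[[t]]$.
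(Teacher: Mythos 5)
Your proposal is correct and follows essentially the same route as the paper: decompose $A(t)=P(t)+D(t)$, use $D(t)^2=P(t)^2-1$, factor out $a_2t^2$, expand via the formal binomial series $(1+X)^{\frac{1}{2}}$, and identify the coefficients through the partial ordinary Bell polynomials with the homogeneity $B_{n,k}(\lambda x)=\lambda^k B_{n,k}(x)$ producing the $a_2^{\frac{1}{2}-k}$ factors. Your explicit remarks on the homogeneity step and on choosing the square root $a_2^{\frac{1}{2}}=a_1$ (so that $E(0)=a_1$ is matched) are refinements the paper leaves implicit, but the argument is the same.
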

\begin{proof}
Let $A(t)$ be the e.g.f. of $a$, with the same notations used in the proof of Theorem \ref{thm:a2n}, we have $(D(t))^2 = 1+(P(t))^2$, where
$D(t)$ and $P(t)$ as in (\ref{dt}) and in (\ref{pt}) respectively.
Since the product $P(t)\cdot P(t)$ is equal to
$$\left(P(t)\right)^2 = 1 + a_2 t^2 + \sum_{n=2}^{+\infty}\left( \sum_{k=0}^n \binom{2n}{2k} a_{2k}a_{2n-2k} \right)\frac{t^{2n}}{(2n)!},$$
we find
\begin{align*}D(t)&=ta_{2}^{\frac{1}{2}}\left(1+\sum_{n=2}^{+\infty}\frac{a_{2}^{-1}}{\left(2n\right)!}\left(\sum_{n=0}^{k}\binom{2n}{2k}a_{2k}a_{2n-2k}\right)t^{2n-2}\right)^{\frac{1}{2}}=\\
&=ta_{2}^{\frac{1}{2}}\left(1+\sum_{n=1}^{+\infty}\frac{a_{2}^{-1}}{\left(2n+2\right)!}\left(\sum_{k=0}^{n+1}\binom{2n+2}{2k}a_{2k}a_{2n-2k+2}\right)t^{2n}\right)^{\frac{1}{2}}
\end{align*}
Then, considering the formal Maclaurin series expansion of $(1+X)^{\frac{1}{2}}$ and by Definition \ref{def:bell}, we obtain
$$D(t) = \sum_{n=0}^{+\infty}\left(\sum_{k=0}^{n}a_2^{1/2-k} \binom{\frac{1}{2}}{k} B_{n,k}(x_1,...,x_{n-k+1}) \right)t^{2n+1}.$$
Now the thesis esily follows by a simple comparison of the corresponding coefficients of the odd powers of $t$  in the expansion (\ref{dt}) of $D(t).$ 

\end{proof}

\begin{remark}
When $R = \mathbb Z$, $B_{\mathbb Z}$ contains many well--known and important integer sequences. We mention here some of them as interesting examples. 
We have seen that the Euler zigzag numbers belong to $B_{\mathbb Z}$. They are listed in OEIS \cite{oeis} as A000111. Thus, all the sequences having as e.g.f. a power of $\sec(t) + \tan(t)$ are in $B_{\mathbb Z}$. \\
For instance the sequence A001250 in OEIS, whose $n$--th element is the number of alternating permutations of order $n$, has e.g.f. $(\sec(t) + \tan(t))^2$.\\
Moreover, the sequence A000667, which is the Boustrophedon transform of all--1's sequence, has e.g.f. $e^t(\sec(t) + \tan(t))$ and belongs to $B_{\mathbb Z}$. \\
Another sequence in $B_{\mathbb Z}$ is A000831, with e.g.f. $\frac{1+\tan(t)}{1-\tan(t)}$.\\
The sequences A006229 and A002017  also belong $B_{\mathbb Z}$ since they have exponential generating functions of the shape $exp({f(t)})$, with $f(t)$ odd function. Indeed, they have e.g.f. $e^{\tan(t)}$ and $e^{\sin(t)}$, respectively. \\
Thanks to Theorem \ref{thm:a2n} and Corollary \ref{cor:a2n}, we have new interesting identities connecting many sequences in OEIS. Furthermore, it is quite surprising that all these (very different) sequences satisfy the same limiting conditions.
\end{remark}

In the following, we will introduce a new transform of sequences that arises from the study of $B_R$, which will allow us to consider $U_R$ as a dynamic ultrametric space.\\
Given $a, b=a^{-1} \in U_R$, we know that 
$$\sum_{h=0}^n \binom{n}{h} a_h b_{n-h} = 0,\quad \forall n\geq 1,$$
from which it follows that
$$a_n = -\sum_{h=0}^{n-1}\binom{n}{h}a_h b_{n-h},\quad b_n = -\sum_{h=0}^{n-1}\binom{n}{h}b_h a_{n-h}.$$
If $a \in B_R$, i.e. $A(-t) = A(t)^{-1}$, then, for all $n\geq 1$, we have
$$-\sum_{h=0}^{n-1}\binom{n}{h}(-1)^ha_ha_{n-h} = \begin{cases} 0 \quad \text{if } n \text{ odd} \cr 2a_n \quad \text{if } n \text{ even}  \end{cases}.$$
Thus it is natural to define the following transform.

\begin{definition} \label{def:auto}
The \emph{autoconvolution} transform $\mathcal A$ maps a sequence $a \in H_R$ into a sequence $b = \mathcal A(a) \in H_R$, where
$$\begin{cases}  b_0 = a_0 \cr b_{2n+1} = a_{2n+1}, \quad \forall n\geq0 \cr b_{2n} = -\frac{1}{2}\sum\limits_{h=1}^{n-1}\binom{n}{h}(-1)^h a_h a_{n-h} , \quad \forall n\geq1 \end{cases}.$$
\end{definition}
The following proposition is a straightforward consequence.
\begin{proposition}
Given any $a\in H_R$, we have $a \in B_R \Leftrightarrow \mathcal A(a) = a$.
\end{proposition}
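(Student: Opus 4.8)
The plan is to read off the defining relations of $B_R$ at the level of Taylor coefficients and to recognise them as exactly the fixed-point equations of $\mathcal A$. Recall from the proof of Theorem \ref{br} that membership $a\in B_R$ (for $a\in U_R$) is equivalent to the single e.g.f. identity $A(t)A(-t)=1$. Since $A(t)A(-t)$ is the e.g.f. of $a\star\mathcal E(a)$, this identity is in turn equivalent to the scalar system
$$c_0=1,\qquad c_n=\sum_{h=0}^{n}\binom{n}{h}(-1)^{n-h}a_ha_{n-h}=0\quad(n\geq1).$$
So the first step is to make this reduction explicit and then to analyse the numbers $c_n$ separately according to the parity of $n$.

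The key observation, and the step that makes the statement clean, is a parity argument on $c_n$. Reindexing the sum by $h\mapsto n-h$ shows $c_n=\sum_{h}\binom{n}{h}(-1)^{h}a_ha_{n-h}$ as well, and averaging the two expressions gives
$$c_n=\frac12\sum_{h=0}^{n}\binom{n}{h}\bigl((-1)^{n-h}+(-1)^{h}\bigr)a_ha_{n-h}=\frac12\bigl(1+(-1)^{n}\bigr)\sum_{h=0}^{n}\binom{n}{h}(-1)^{h}a_ha_{n-h}.$$
Hence $c_n=0$ automatically for every odd $n$, for an arbitrary sequence $a$: the odd-index part of $A(t)A(-t)=1$ carries no information. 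For even $n=2m$ I would split off the two boundary terms $h=0$ and $h=2m$, which together contribute $2a_0a_{2m}$, so that $c_{2m}=0$ becomes
$$2a_0a_{2m}=-\sum_{h=1}^{2m-1}\binom{2m}{h}(-1)^{h}a_ha_{2m-h}.$$

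Now I would compare with Definition \ref{def:auto}. By construction $\mathcal A(a)$ agrees with $a$ in index $0$ and in every odd index, and its even-index entries are precisely one half of the signed binomial self-convolution of $a$; thus $\mathcal A(a)=a$ is equivalent to the family $2a_{2m}=-\sum_{h=1}^{2m-1}\binom{2m}{h}(-1)^{h}a_ha_{2m-h}$ for $m\geq1$. For $a\in U_R$ we have $a_0=1$, so this family is exactly the system $c_{2m}=0$ found above, while $c_0=a_0^2=1$ and all odd conditions hold for free. This settles both implications simultaneously: if $a\in B_R$ the even equations hold (the forward direction, already contained in the computation preceding Definition \ref{def:auto}), and conversely the even equations together with $a_0=1$ reconstruct all the coefficient conditions $c_0=1$, $c_n=0$, i.e. $A(t)A(-t)=1$, so $a\in B_R$.

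The only point requiring care is the role of the constant term: the condition $\mathcal A(a)=a$ imposes nothing on $a_0$, since $\mathcal A(a)_0=a_0$ by definition, whereas $A(t)A(-t)=1$ forces $a_0^2=1$. I would therefore state and use the equivalence inside $U_R$, where $a_0=1$ is guaranteed, so that the factor $a_0$ multiplying $a_{2m}$ disappears and the even autoconvolution relations coincide verbatim with the vanishing of the even coefficients $c_{2m}$. This is the main (and essentially the only) obstacle; once it is handled, the proposition is an immediate transcription of the coefficient analysis, which is why it is billed as a straightforward consequence.
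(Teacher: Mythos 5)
Your proof is correct and takes essentially the same route as the paper, which offers no argument beyond calling the proposition a straightforward consequence of the coefficient identities derived just before Definition \ref{def:auto}: both amount to comparing the coefficients of $A(t)A(-t)=1$ (equivalently, of $a\star\mathcal E(a)=(1,0,0,\ldots)$) with the even-index fixed-point equations of $\mathcal A$, the odd-index conditions holding automatically. Your closing observation that the backward implication genuinely requires $a_0=1$ --- since $\mathcal A(a)=a$ imposes no condition on $a_0$, so the statement must be read inside $U_R$ rather than for arbitrary $a\in H_R$ --- is a real point that the paper's phrasing silently glosses over.
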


Finally, we introduce another transform strictly related to $\mathcal A$.

\begin{definition}
The transform $\mathcal U$ maps a sequence $a \in H_R$ into a sequence $\mathcal U(a)=b \in H_R$ as follows:
$$\begin{cases} b_0=a_0 \cr b_{2n+1}=a_{2n+1}, \quad \forall n\geq0 \cr b_{2n}= (2n)! \sum\limits_{k=0}^n \binom{\frac{1}{2}}{k}B_{n+k,2k}(x_1,...,x_{n-k+1}),\quad \forall n\geq1 \end{cases},$$
where $x_i=\frac{a_{2i-1}}{(2i-1)!}$.
\end{definition}

\begin{remark}
Given any sequence $a \in U_R$, the transform $\mathcal U$ produces a sequence in $B_R$ where the terms with odd index are the corresponding terms of $a$.
Clearly, we have that a sequence $a \in U_R$ is in $B_R$ if and only if $a=\mathcal U(a)$.
\end{remark}

\begin{proposition}
Given $a \in H_R$, with e.g.f. $A(t)$, then $\mathcal{U}(a)$ has e.g.f.
$$U(t) = \left( 1+ \left(\frac{A(t) - A(-t)}{2}\right)^2\right)^{\frac{1}{2}}+ \frac{A(t) - A(-t)}{2}.$$
\end{proposition}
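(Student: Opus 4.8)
The plan is to split $U(t)$ into its even and odd parts and match each against the defining data of $\mathcal U(a)=b$. First I would observe that the quantity appearing in the statement,
$$D(t):=\frac{A(t)-A(-t)}{2},$$
is precisely the odd part of $A(t)$, namely $D(t)=\sum_{n\geq1}\frac{a_{2n-1}}{(2n-1)!}t^{2n-1}$. In particular its coefficients are exactly the numbers $x_i=\frac{a_{2i-1}}{(2i-1)!}$ entering the definition of $\mathcal U$, and with this notation the proposed e.g.f. reads $U(t)=\left(1+(D(t))^2\right)^{\frac{1}{2}}+D(t)$.

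Next I would exploit parity. Since $D(t)$ is odd, $(D(t))^2$ is an even series with zero constant term, so $\left(1+(D(t))^2\right)^{\frac{1}{2}}$ is an even series with constant term $1$, while $D(t)$ is odd. Hence the odd part of $U(t)$ equals $D(t)$ exactly; comparing coefficients of $t^{2n+1}$ gives $b_{2n+1}=a_{2n+1}$, in agreement with the definition of $\mathcal U$, and the constant term gives $b_0=1$.

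The even part of $U(t)$ is the series $P(t):=\left(1+(D(t))^2\right)^{\frac{1}{2}}$, which is exactly the function analyzed in the proof of Theorem \ref{thm:a2n}. There, using the formal Maclaurin expansion of $(1+X)^{\frac{1}{2}}$ together with the generating identity for the partial ordinary Bell polynomials, it is shown that
$$P(t)=\sum_{n=0}^{+\infty}\left(\sum_{k=0}^n\binom{\frac{1}{2}}{k}B_{n+k,2k}(x_1,\ldots,x_{n-k+1})\right)t^{2n}.$$
Comparing the coefficient of $t^{2n}$ with $\frac{b_{2n}}{(2n)!}$ therefore reproduces $b_{2n}=(2n)!\sum_{k=0}^n\binom{\frac{1}{2}}{k}B_{n+k,2k}(x_1,\ldots,x_{n-k+1})$ for every $n\geq1$, completing the verification.

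I do not expect a serious obstacle, since the even-indexed computation is already carried out in Theorem \ref{thm:a2n} and may simply be cited; the whole argument reduces to an even/odd splitting plus this reference. The one point that requires care is the constant term: the even part forces $U(0)=1$, whereas the transform sets $b_0=a_0$, so the stated e.g.f. is exact precisely when $a_0=1$ (for instance $a\in U_R$, the relevant case in the preceding remark); otherwise the constant term must be corrected by $(a_0-1)$. I would make this hypothesis explicit at the outset of the argument.
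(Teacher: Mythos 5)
Your proof follows essentially the same route as the paper's: write $D(t)=\frac{A(t)-A(-t)}{2}$ as the odd part of $A(t)$, observe that the odd-indexed terms of $\mathcal U(a)$ reproduce $D(t)$ exactly, and cite the computation $P(t)=\left(1+(D(t))^2\right)^{\frac{1}{2}}$ from the proof of Theorem \ref{thm:a2n} for the even-indexed terms. Your remark about the constant term is a legitimate refinement the paper glosses over: the stated $U(t)$ has constant term $1$ while the transform sets $b_0=a_0$, so the formula is exact precisely when $a_0=1$ (e.g.\ $a\in U_R$, the case actually used afterwards).
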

\begin{proof}
We can write $A(t) = P(t) + D(t)$, where $P(t)$ and $D(t)$ as in (\ref{pt}) and in (\ref{dt}) respectively.
We have $A(-t) = P(t) - D(t)$ and consequently $D(t) = \frac{A(t) - A(-t)}{2}$. By Theorem \ref{thm:a2n}, the terms in the even places of $\mathcal U(a)$ have e.g.f $P(t)=\left(1 + (D(t))^2\right)^{\frac{1}{2}}$. Thus, we have
$$U(t) = \left(1 + (D(t))^2\right)^{\frac{1}{2}} + D(t).$$
\end{proof}

Given $a,b \in H_R$, let us define 
$$\delta(a,b) := 2^{-k},$$
if $a_i = b_i$, for any $0 \leq i \leq k - 1$. It is well--known that $\delta$ is an ultrametric in $H_R$. Indeed, 
\begin{itemize}
\item $\delta(a,b)=0 \Leftrightarrow a=b$,
\item $\delta(a,b)=\delta(b,a)$,
\item $\delta(a,c) \leq \max (\delta(a,b), \delta(b,c))$,
\end{itemize}
for any $a,b,c \in H_R$. Thus, $(H_R,\delta)$ is an ultrametric space.

Let us recall that we denote $H_R^{(n)}$ the ring whose elements are sequences of elements of $R$ with length $n$. Similarly, $U_R^{(n)}$ and $B_R^{(n)}$ are the subgroups of $H_R^{(n)*}$ corresponding to the subgroups $U_R$ and $B_R$ of $H_R^*$, respectively.

\begin{theorem} \label{thm:au}
Given any $a \in U_R$, we have
$$\delta(\mathcal A^n(a), \mathcal U(a)) \leq \frac{1}{2^{2(n+1)}},$$
where $\mathcal A^n = \underbrace{\mathcal A \circ ... \circ \mathcal A}_{n-\textit{times}}$.
\end{theorem}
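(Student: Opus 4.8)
The plan is to recognize $\mathcal{U}(a)$ as the unique fixed point of $\mathcal{A}$ toward which the orbit $\mathcal{A}^n(a)$ is converging, and then to show by induction on $n$ that each successive application of $\mathcal{A}$ forces agreement with $\mathcal{U}(a)$ in one further even coordinate. Write $u=\mathcal{U}(a)$. By the Remark following the definition of $\mathcal{U}$ we have $u\in B_R$, and by the Proposition characterizing $B_R$ as the fixed-point set of $\mathcal{A}$ this gives $\mathcal{A}(u)=u$. Two structural features of the transforms drive everything: first, both $\mathcal{A}$ and $\mathcal{U}$ leave every odd-indexed term and the $0$th term unchanged, so that $(\mathcal{A}^k(a))_{2m+1}=a_{2m+1}=u_{2m+1}$ for all $k\ge0$ and all $m\ge0$, and likewise $(\mathcal{A}^k(a))_0=1=u_0$; second, the even coordinate produced by $\mathcal{A}$ in position $2n$ is a fixed quadratic expression in the coordinates of index $1,\ldots,2n-1$, i.e.\ of strictly smaller index. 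This ``triangularity'' is exactly what makes the iteration stabilize coordinate by coordinate.

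The heart of the argument is the following claim, which I would prove by induction on $k$: writing $a^{(k)}=\mathcal{A}^k(a)$, one has $a^{(k)}_{2m}=u_{2m}$ for every $m\le k$. The base case $k=0$ is just $a_0=1=u_0$. For the inductive step I would evaluate $a^{(k+1)}_{2m}=(\mathcal{A}(a^{(k)}))_{2m}$ for $m\le k+1$ directly from the defining formula. Every argument occurring in that quadratic carries an index in $\{1,\ldots,2m-1\}\subseteq\{1,\ldots,2k+1\}$; by the inductive hypothesis the even such indices (all $\le 2k$) already match $u$, while the odd such indices match $u$ automatically. Substituting these equalities term by term turns the expression into the same quadratic in the coordinates of $u$, which by $\mathcal{A}(u)=u$ returns precisely $u_{2m}$. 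Hence $a^{(k+1)}_{2m}=u_{2m}$ for all $m\le k+1$, closing the induction.

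Given the claim, the metric bound follows immediately. After $n$ iterations, $a^{(n)}$ agrees with $u$ in all even coordinates $0,2,\ldots,2n$ and in every odd coordinate; in particular $a^{(n)}_i=u_i$ for all $0\le i\le 2n+1$. By the definition of the ultrametric $\delta$, agreement through index $2n+1$ yields
$$\delta\bigl(\mathcal{A}^n(a),\mathcal{U}(a)\bigr)\le 2^{-(2n+2)}=\frac{1}{2^{2(n+1)}},$$
which is the asserted estimate.

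The step I expect to be the main (indeed essentially the only) obstacle is the index bookkeeping. One must verify carefully that the quadratic defining the value in position $2n$ genuinely involves no coordinate of index $\ge 2n$, so that the inductive hypothesis covers every factor that appears; and one must track the even/odd split so that the induction advances the even-coordinate ``frontier'' by exactly one at each stage. It is this bookkeeping that produces the exponent $2(n+1)$ rather than a weaker bound, since each application of $\mathcal{A}$ corrects one new even term while the odd terms, already correct, contribute the extra factor of $2$ in the exponent.
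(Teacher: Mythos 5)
Your proposal is correct and follows essentially the same route as the paper: an induction in which, because $\mathcal A$ fixes the odd-indexed terms and computes each term of index $2m$ from terms of strictly smaller index, agreement with the fixed point $\mathcal U(a)\in B_R$ propagates by one even coordinate per iteration. Your version merely makes explicit the triangularity and fixed-point facts that the paper's proof uses implicitly, so no substantive difference.
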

\begin{proof}
We prove the thesis by induction.\\
Let us denote $a' = \mathcal U(a)$ and $b = \mathcal A(a)$. It is straightforward to check that
$$a' = (1, a_1, a_1^2, a_3,...), \quad b = (1, a_1, a_1^2, a_3,...).$$ 
Thus, $a'$ and $b$ coincide at least in the first 4 terms, i.e., 
$$\delta(\mathcal A(a), \mathcal U(a)) \leq \frac{1}{2^4}.$$
Now, let us suppose that given $b = \mathcal A^n(a)$, we have $\delta(\mathcal A^n(a), \mathcal U(a)) \leq \frac{1}{2^{2(n+1)}}$, i.e. $b_i = a'_i$ for all $i \leq 2n+1$ and consider $c = \mathcal A(b)$. Since $a' \in B_R$, we remember that for all $n\geq 1$ we have
$$-\sum_{h=0}^{n-1}\binom{n}{h}(-1)^ha'_ha'_{n-h} = \begin{cases} 0 \quad \text{if } n \text{ odd} \cr 2a'_n \quad \text{if } n \text{ even}  \end{cases}.$$
Thus, by Definition \ref{def:auto}, we obtain $c_i = a'_i$ for all $i \leq 2n+3$, since $(b_0,...,b_{2n+1}) = (a'_0,...,a'_{2n+1}) \in B_R^{(2n+2)}$ by inductive hypothesis. Hence, we have proved that
$$\delta(\mathcal A^{n+1}(a),\mathcal U(a))\leq \frac{1}{2^{2(n+2)}}.$$
\end{proof}

As a consequence of Theorem \ref{thm:au}, we can observe that $\mathcal A$ can be considered as an approximation of $\mathcal U$. Indeed, given a sequence $a \in U_R$, sequences $\mathcal A^n(a)$ have more elements equal to elements of $\mathcal U(a)$ for increasing values of $n$.

\begin{example}
Given $a=(a_0,a_1,a_2,a_3,a_4,a_5) \in U_R^{(6)}$, then
$$\mathcal U(a) = (1,a_1,a_1^2,a_3,4a_1a_3-3a_1^4,a_5)$$
and 
$$\mathcal A(a) = (1,a_1,a_1^2,a_3,4a_1a_3-3a_2^2,a_5).$$
Considering $\mathcal A^2$, we obtain
$$\mathcal A^2(a) = (1,a_1,a_1^2,a_3,4a_1a_3-3a_1^4,a_5) = \mathcal U(a).$$
In other words, given any sequence $a \in U_R^{(6)}$, $\mathcal A^2(a) = \mathcal U(a) \in B_R^{(6)}$, i.e., in $U_R^{(6)}$ the transforms $\mathcal A^2$ and $\mathcal U$ are identical.
\end{example}
 Frow Theorem \ref{thm:au} easily follows the next corollary.
\begin{corollary}
Given any $a \in U_R^{(2n)}$, we have
$$\mathcal U(a) = \mathcal A^{n-1}(a).$$
Moreover, for any $a \in U_R$, we have
$$\mathcal U(a) = \lim_{n\rightarrow +\infty}\mathcal A^{n}(a).$$

\end{corollary}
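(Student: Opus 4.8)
The plan is to derive both assertions directly from Theorem~\ref{thm:au}, which controls how fast the iterates $\mathcal A^n(a)$ agree with $\mathcal U(a)$ in the ultrametric $\delta$. The first claim concerns finite sequences of length $2n$, the second an infinite limit; both are essentially statements that ``enough agreement'' has been reached, so the whole corollary is a matter of reading off indices from the inductive bound $\delta(\mathcal A^m(a),\mathcal U(a))\le 2^{-2(m+1)}$.

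For the finite statement, I would argue as follows. Recall that $\delta(u,v)=2^{-k}$ means $u_i=v_i$ for all $0\le i\le k-1$; hence the bound $\delta(\mathcal A^m(a),\mathcal U(a))\le 2^{-2(m+1)}$ says precisely that $\mathcal A^m(a)$ and $\mathcal U(a)$ coincide in all coordinates with index $\le 2m+1$. Taking $m=n-1$ gives agreement in all coordinates with index $\le 2n-1$, i.e.\ in coordinates $0,1,\dots,2n-1$. Since an element of $U_R^{(2n)}$ is a sequence of length $2n$, indexed by $0,1,\dots,2n-1$, this means $\mathcal A^{n-1}(a)$ and $\mathcal U(a)$ agree in every coordinate of the truncated space, and therefore $\mathcal U(a)=\mathcal A^{n-1}(a)$ as elements of $U_R^{(2n)}$. (The one point to state carefully is that $\mathcal A$ and $\mathcal U$, being defined coordinatewise with each even coordinate $b_{2k}$ depending only on the odd coordinates up to index $2k-1$, descend well to the finite rings $H_R^{(2n)}$, so applying Theorem~\ref{thm:au} to a truncation is legitimate.)

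For the infinite statement, I would interpret convergence $\mathcal A^{n}(a)\to\mathcal U(a)$ with respect to $\delta$: given any index $N$, I must find $n_0$ so that for $n\ge n_0$ the sequences $\mathcal A^n(a)$ and $\mathcal U(a)$ agree on coordinates $0,\dots,N$, i.e.\ $\delta(\mathcal A^n(a),\mathcal U(a))\le 2^{-(N+1)}$. By Theorem~\ref{thm:au}, $\delta(\mathcal A^n(a),\mathcal U(a))\le 2^{-2(n+1)}$, which tends to $0$ as $n\to\infty$; choosing $n_0$ with $2(n_0+1)\ge N+1$ suffices. Hence $\mathcal U(a)$ is the limit of the Cauchy sequence $(\mathcal A^n(a))_n$ in the ultrametric space $(H_R,\delta)$.

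I do not expect a genuine obstacle here, since all the analytic content is packaged into Theorem~\ref{thm:au}; the corollary is a translation of its bound. The only mild care needed is the bookkeeping between the $\delta$-exponent $2(m+1)$ and the range of agreeing indices $\le 2m+1$, and the observation that in the finite case the index range $0,\dots,2n-1$ exhausts a length-$2n$ sequence exactly when $m=n-1$. Everything else is immediate.
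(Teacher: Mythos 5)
Your proposal is correct and matches the paper's approach: the paper gives no written proof, simply asserting that the corollary follows easily from Theorem~\ref{thm:au}, and your argument is exactly the routine unpacking of that theorem's bound (translating $\delta\le 2^{-2(m+1)}$ into agreement on indices $0,\dots,2m+1$, setting $m=n-1$ for the finite case, and letting the bound tend to $0$ for the limit).
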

Clearly, if two sequences $a, b \in H_R$ coincide in the first $k$ terms, then $\mathcal A(a)$ and $\mathcal A(b)$ coincide at least in the first $k$ terms. Thus, we have the following proposition.
\begin{proposition}
Given any $a,b \in H_R$, then
$$\delta(\mathcal A(a), \mathcal A(b)) \leq \delta(a,b).$$
\end{proposition}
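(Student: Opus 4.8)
The plan is to show that the autoconvolution transform $\mathcal A$ is \emph{non-expansive} with respect to the ultrametric $\delta$, which amounts to verifying that the defining formula for $\mathcal A(a)$ depends on each index only through strictly earlier-or-equal indices. First I would recall the hypothesis: $\delta(a,b) \leq 2^{-k}$ means precisely that $a_i = b_i$ for all $0 \leq i \leq k-1$. The goal is then to prove that $\mathcal A(a)_i = \mathcal A(b)_i$ for all $0 \leq i \leq k-1$ as well, which by definition of $\delta$ gives $\delta(\mathcal A(a),\mathcal A(b)) \leq 2^{-k} \leq \delta(a,b)$.

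The argument splits according to the three cases in Definition \ref{def:auto}. For $i = 0$ we have $\mathcal A(a)_0 = a_0 = b_0 = \mathcal A(b)_0$ whenever $k \geq 1$. For an odd index $i = 2n+1 \leq k-1$ we have $\mathcal A(a)_i = a_i = b_i = \mathcal A(b)_i$ directly, since the odd terms are copied verbatim. The only case requiring a small computation is an even index $i = 2n \leq k-1$, where
$$\mathcal A(a)_{2n} = -\frac{1}{2}\sum_{h=1}^{n-1}\binom{n}{h}(-1)^h a_h a_{n-h}.$$
Here every index $h$ and $n-h$ appearing in the sum satisfies $1 \leq h \leq n-1$ and $1 \leq n-h \leq n-1$, hence all of them are strictly less than $n \leq 2n = i \leq k-1$. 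Since $a$ and $b$ agree on all indices up to $k-1$, and in particular on all indices $\leq n-1 < k-1$, each product $a_h a_{n-h}$ equals $b_h b_{n-h}$, and therefore $\mathcal A(a)_{2n} = \mathcal A(b)_{2n}$.

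There is no genuine obstacle here; the whole content is the observation already flagged in the paragraph preceding the proposition, namely that the even-index formula for $\mathcal A$ is \emph{causal} (it reads only coordinates of index below the output index). The one point deserving care is the index bookkeeping: I must confirm that the summation range $1 \leq h \leq n-1$ guarantees both factors have index at most $n-1$, so that equality of $a$ and $b$ up to index $k-1$ is more than enough to conclude for the output index $2n \leq k-1$. Once that is checked in each of the three cases, the agreement of $\mathcal A(a)$ and $\mathcal A(b)$ on the first $k$ coordinates is established, and the non-expansiveness inequality $\delta(\mathcal A(a),\mathcal A(b)) \leq \delta(a,b)$ follows immediately from the definition of the ultrametric.
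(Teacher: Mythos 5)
Your proof is correct and follows exactly the paper's (very terse) argument: the paper simply observes that if $a$ and $b$ agree on the first $k$ terms then so do $\mathcal A(a)$ and $\mathcal A(b)$, because the even-index formula only reads coordinates of strictly smaller index. You have merely spelled out the index bookkeeping behind that one-line observation, so no further comment is needed.
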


By the previous proposition, we have that $\mathcal A$ is a contraction mapping on the ultrametric space $(H_R, \delta)$. As a first interesting consequence, we can observe that $\mathcal A$ is a continuous function. Moreover, we have that the ultrametric group $(U_R, \star, \delta)$ with the contraction mapping $\mathcal A$ is an ultrametric dynamic space, where the set of fixed points is the subgroup $B_R$. In this way, we have found a very interesting example of ultrametric dynamic space. Ultrametric dynamics are very studied in several fields, see \cite{Pri} for a good reference about dynamics on ultrametric spaces.

\end{document}